\def\namedlabel#1#2{\begingroup
    #2%
    \def\@currentlabel{#2}%
    \phantomsection\label{#1}\endgroup
}
\numberwithin{equation}{section}
\theoremstyle{plain}
\newtheorem{theorem}{Theorem}[section]
\newtheorem{corollary}[theorem]{Corollary}
\newtheorem{remark}[theorem]{Remark}
\def \Re {\mathbb{R}}
\def \uu {\bm{u}}
\def \TT {\mb{T}}
\def \ww {\bm{w}}
\def\transp {{\boldsymbol\top}}
\def \E {\mathbb{E}}
\def \cc {\boldsymbol{c}}
\def \ee {\boldsymbol{e}}
\def \uu {\boldsymbol{u}}
\def \vv {\boldsymbol{v}}
\def \ww {\boldsymbol{w}}
\def \bzero {\boldsymbol{0}}
\def\ds {\hskip 1pt{\rm{d}}}
\def\bSigma {{\boldsymbol{\Sigma}}}
\def\bOmega {{\boldsymbol{\Omega}}}
\def\btheta {{\boldsymbol{\theta}}}
\def\bTheta {{\boldsymbol{\Theta}}}
\def \TT {\boldsymbol{T}}
\def \VV {\boldsymbol{V}}
\def \WW {{\boldsymbol{W}}}
\def \Mu {\boldsymbol{\mu}}
\def \Eta {\boldsymbol{\eta}}
\def\e{{\mathbb E}}
\def\Cov{{\mathrm{Cov}}}
\begin{document}

\title{\Large\textbf{Independence Properties of the Truncated Multivariate Elliptical Distributions}}

\author{
{Michael Levine,}\thanks{Department of Statistics, Purdue University, West Lafayette, IN 47907, U.S.A.}
\ {Donald Richards,}\thanks{Department of Statistics, Pennsylvania State University, University Park, PA 16802, U.S.A.}
\ {and Jianxi Su}\thanks{Department of Statistics, Purdue University, West Lafayette, IN 47907, U.S.A.
\endgraf
\ $^*$Corresponding author; e-mail address: mlevins@purdue.edu
}}

\date{\today}

\maketitle

\begin{abstract}
Truncated multivariate distributions arise extensively in econometric modelling, when non-negative random variables are intrinsic to the data-generation process.  and more broadly in censored and truncated regression models, simultaneous equations modelling, multivariate regression, and other areas.  In some applications, there arises the problem of characterizing truncated multivariate distributions through correlation and independence properties of sub-vectors.  In this paper, we characterize the truncated multivariate normal random vectors for which two complementary sub-vectors are mutually independent.  Further, we characterize the multivariate truncated elliptical distributions, proving that if two complementary sub-vectors are mutually independent then the distribution of the joint vector is truncated multivariate normal, as is the distribution of each sub-vector.  As an application, we apply the independence criterion to test the hypothesis of independence of the entrance examination scores and subsequent course averages achieved by a sample of university students; to do so, we verify the regularity conditions underpinning a classical theorem of Wilks on the asymptotic null distribution of the likelihood ratio test statistic.

\medskip
\noindent
{{\em Key words and phrases}.  Truncated elliptical distributions, multivariate normal distributions, correlation, independence}

\smallskip
\noindent
{{\em 2010 Mathematics Subject Classification}. Primary: 62H20, 60E05. Secondary: 62E10.}

\smallskip
\noindent
{\em Running head}: Truncated elliptical distributions.
\end{abstract}

\section{Introduction}
\label{sec:intro}

The truncated multivariate normal distributions are a family of distributions that have appeared in simultaneous equations modelling and multivariate regression \cite{Amemiya}, economics \cite{Heckman}, econometric models for auction theory \cite{Hong}, and other areas.  Consequently, there exists a wide literature on the properties of these distributions.

To define the truncated multivariate normal distributions, we recall the component-wise partial ordering on $p$-dimensional Euclidean space, $\Re^p$: For column vectors $\uu = (u_1,\ldots,u_p)^\transp$ and $\vv = (v_1,\ldots,v_p)^\transp$ in $\Re^p$ we write $\uu \ge \vv$ if $u_j \ge v_j$ for all $j=1,\ldots,p$.

Let $\Mu \in \Re^p$ and let $\bSigma$ be a $p \times p$ positive definite matrix.  For $\cc \in \Re^p$, we say that the random vector $\WW \in \Re^p$ has a \textit{truncated multivariate normal distribution, with truncation point $\cc$}, if the probability density function of $\WW$ is
\begin{equation}
\label{trnorm}
f(\ww;\Mu,\bSigma,\cc) = C \exp\left[-\tfrac12(\ww-\Mu)^\transp\bSigma^{-1}(\ww-\Mu)\right],
\qquad \ww \ge \cc,
\end{equation}
where $C$, the normalizing constant, is given by
$$
C^{-1} = \int_{\ww \ge \cc} \, \exp\left[-\tfrac12(\ww-\Mu)^\transp\bSigma^{-1}(\ww-\Mu)\right] \ds \ww.
$$
We write $\WW \sim N_p(\Mu,\bSigma,\cc)$ whenever $\WW$ has the density function \eqref{trnorm}.  Further, we denote the usual (untruncated) multivariate normal distribution by $N_p(\Mu,\bSigma)$.

Suppose that $\WW$, $\Mu$, and $\cc$ are partitioned into sub-vectors,
\begin{equation}
\label{vector_partition}
\WW = \begin{pmatrix}\WW_1 \\ \WW_2\end{pmatrix},
\qquad
\Mu = \begin{pmatrix}\Mu_1 \\ \Mu_2\end{pmatrix},
\qquad
\cc = \begin{pmatrix}\cc_1 \\ \cc_2\end{pmatrix},
\end{equation}
where $\WW_j$, $\Mu_j$, and $\cc_j$ each are of dimension $p_j$, $j=1,2$, with $p_1 + p_2 = p$.  Further, we partition $\bSigma$ so that
\begin{equation}
\label{Sigmadecomp}
\bSigma=\begin{pmatrix}
\bSigma_{11} & \bSigma_{12} \\
\bSigma_{21} & \bSigma_{22}
\end{pmatrix}
\end{equation}
where $\bSigma_{jk}$ is of order $p_j\times p_k$, for $j,k = 1,2$.  In a study of the correlation and independence properties of sub-vectors of truncated distributions, we show in Section \ref{sec:ellipticalcorr} that the uncorrelatedness of $\WW_1$ and $\WW_2$ cannot be characterized by the condition that $\bSigma_{12} = \bzero$.  Going beyond the study of the correlation properties of $\WW$, we prove in Section \ref{sec:normal} that the condition $\bSigma_{12} = \bzero$ is necessary and sufficient for $\WW_1$ and $\WW_2$ to be mutually independent; in particular, no restrictions are required on $\Mu$ or $\cc$.

More general than the truncated multivariate normal distributions are their elliptical counterparts.  For $\cc$ and $\Mu$ in $\Re^p$, and a positive definite matrix $\bSigma$, a random vector $\WW \in \Re^p$ is said to have a \textit{truncated elliptical distribution, with truncation point $\cc$}, if its probability density function is of the form
\begin{equation}
\label{trelli}
f(\ww;\Mu,\bSigma,\cc) = 
g\big((\ww-\Mu)^\transp\bSigma^{-1}(\ww-\Mu)\big), \qquad \ww \ge \cc,
\end{equation}
for a non-constant {\it generator} $g:[0,\infty) \to [0,\infty)$.  We write $(\WW_1,\WW_2)\sim E_{p}(\boldsymbol{\mu},\boldsymbol{\Sigma},g,\cc)$ with the untruncated counterpart being denoted by $E_{p}(\boldsymbol{\mu},\boldsymbol{\Sigma},g)$.  Examples of truncated elliptically contoured distributions are the truncated multivariate Student's $t$-distributions \cite{Ho_etal,Matos}.
We prove in Section \ref{sec:elliptical} that if $(\WW_1,\WW_2)\sim E_{p}(\boldsymbol{\mu},\boldsymbol{\Sigma},g,\cc)$ then, under certain regularity conditions on the generator $g$,  a necessary and sufficient condition that $\WW_1$ and $\WW_2$ be independent is that $\bSigma_{12} = \bzero$.   Here again, no conditions are required on $\Mu$ or $\cc$; moreover, we verify that the stated regularity conditions on $g$ are mild since they hold for many familiar elliptical distributions.

In Section \ref{sec:application}, we consider for illustrative purposes an application of the criterion derived in Section \ref{sec:normal} to testing the hypothesis of independence of the sub-vectors $\WW_1$ and $\WW_2$.  We obtain from the classical theorem of Wilks \cite{Hogg} the asymptotic null distribution of the likelihood ratio test statistic, and we provide an application to a data set given by Cohen \cite{Cohen} on the entrance examination scores and subsequent course averages achieved by a large sample of university students.

\section{Correlation properties of truncated elliptical distributions}
\label{sec:ellipticalcorr}

In this section we show, first, that the correlation structure of a multivariate elliptical distribution does not describe the correlation structure of its truncated version. More precisely, even if a particular multivariate elliptical distribution possesses an identity correlation matrix, this fact is not equivalent to the lack of correlation between components of the truncated version of that multivariate elliptical distribution.

We will demonstrate our claim using the bivariate case.  Starting with elliptically distributed random variables $(X_1,X_2)^\transp \sim E_2(\boldsymbol{\mu},\boldsymbol{\Sigma},g)$, set
\begin{equation*}
\label{Sigma2by2}
\bSigma = \begin{pmatrix}
1 & \rho \\ \rho & 1
\end{pmatrix},
\end{equation*}
without loss of generality, where $|\rho| < 1$.  Let $(W_1,W_2) = (X_1,X_2) | \{X_1 \ge c_1, X_2\ge c_2\}$ be the version of $(X_1,X_2)$ that is truncated at $\cc = (c_1,c_2)^\transp$.  For simplicity, consider the case in which $\cc = \boldsymbol{\mu}$, so that $(W_1,W_2)^\transp \sim E_{2}(\boldsymbol{\mu},\boldsymbol{\Sigma},g,\cc).$ We will now show that uncorrelatedness between $W_1$ and $W_2$ is not equivalent to $\rho=0$.

At the outset, let us recall from \cite{Fang} a stochastic representation for elliptically distributed random variables:
\[
\begin{pmatrix}X_1 \\ X_2\end{pmatrix} \overset{d}{=} R\ \boldsymbol{\Sigma}^{1/2} \begin{pmatrix}U_1 \\ U_2\end{pmatrix} + \boldsymbol{\mu},
\]
where $(U_1,U_2)^\transp$ is distributed uniformly over the unit circle, and the generating random variable $R$ has the density function $f(r)=2\pi r g(r^2),$ $r>0$.  Define
$$
U_1^*=U_1, \quad U^*_2=\rho U_1+(1-\rho^2)^{1/2}\, U_2;
$$
then, $\boldsymbol{\Sigma}^{1/2}\ (U_1,U_2)^\transp = (U_1^*,U^*_2)^\transp$, and
\begin{align*}
\Cov(W_1,W_2) &= \e\big(\e[W_1W_2|R]\big) - \e\big(\e[W_1|R]\big) \ \e\big(\e[W_2|R]\big) \\
  &= \e(R^2) \, \e[U_1^*U_2^*|U_1^*>0,U_2^*>0] - (\e\, R)^2 \, \prod_{i=1}^2\e[U_i^*|U_1^*>0,U_2^*>0].
\end{align*}
To calculate these conditional expectations, we transform $(U_1^*,U_2^*)$ to polar coordinates,
$$
U_1^*= \cos \Psi, \quad U_2^* = \rho \cos \Psi + (1-\rho^2)^{1/2} \sin \Psi,
$$
where the random variable $\Psi$ is uniformly distributed on the interval $(-\pi,\pi)$.  Letting $\psi^* =\tan^{-1}(-(1-\rho^2)^{-1/2}\rho)$, we obtain
\begin{align*}
  \e[U_1^*U_2^*&|U_1^*>0,U_2^*>0] \\
  &= \e\big[\rho \cos^2 \Psi + (1-\rho^2)^{1/2} \sin \Psi \cos \Psi \, \big| \, \Psi \in ( \psi^*,\pi/2) \big]\\
  &= (\tfrac12\pi - \psi^*)^{-1} \left[\tfrac14 \rho (\pi-\sin 2\psi^*)
  -\tfrac12 \rho\psi + \tfrac14 (1-\rho^2)^{1/2}(1 + \cos 2\psi^*) \right] \equiv h_1(\rho).
\end{align*}
Similarly,
\begin{align*}
  \e[U_1^*|U_1^*>0,U_2^*>0] &= \e[\cos \Psi \, \big| \, \Psi \in ( \psi^*,\pi/2) ] \\
  &=  (\tfrac12\pi - \psi^*)^{-1} \left[1-\sin \psi^*  \right] \equiv h_2(\rho),
\end{align*}
and
\begin{align*}
\e[U_2^*|U_1^*>0,U_2^*>0] &= \e[\rho \cos \Psi + (1-\rho^2)^{1/2}\sin \Psi \, \big| \, \Psi \in (\psi^*,\pi/2) ]\\
  &= (\tfrac12\pi - \psi^*)^{-1} \left[\rho(1-\sin \psi^*) + (1-\rho^2)^{1/2} \cos \psi^* \right] \equiv h_3(\rho).
\end{align*}

In summary, we have obtained
\begin{eqnarray}
\label{eqn:bivariate-cov}
\Cov(W_1,W_2)= \e[R^2] h_1(\rho) - [\e \, R]^2 h_2(\rho) h_3(\rho).
\end{eqnarray}
Note that $\rho=0$ implies $\psi^* = 0$.  Hence, $h_1(0)=1/\pi$ and $h_2(0)=h_3(0)=2/\pi$.

We remark that uncorrelatedness cannot be characterized for all elliptical truncated distributions through the condition $\rho=0$.  Consider, for instance, the truncated bivariate Student's $t$-distribution with degrees-of-freedom $\tau>0$, where the associated generating variable $R$ has the density function that is proportional to
$
(1+\tau^{-1} r^2)^{-(\tau+2)/2}$,
$r>0$; this density corresponds to the generalized beta distribution of the second kind \cite{mcd}.  It is straightforward to deduce that
\[
\frac{\e[R^2]}{[\e \, R]^2} = \frac{4 \Gamma(\tau/2)\Gamma((\tau-2)/2)}{\pi [\Gamma((\tau-1)/2)]^2},
\]
$\tau > 2$.  Noting that the gamma function $\Gamma(\cdot)$ is strictly log-convex \cite{Artin}, we have
\[
\log(\Gamma((\tau-1)/2) < \left[ \log(\Gamma(\tau/2))+\log(\Gamma((\tau-2)/2)\right]/2,
\]
$\tau > 2$, equivalently ${\e[R^2]}/{[\e\, R]^2} > 4/\pi$.  By Equation (\ref{eqn:bivariate-cov}), $\Cov(W_1,W_2)>0$; hence, for the truncated bivariate Student's $t$-distributions with truncation points equal to the means, the condition $\rho=0$ implies that $W_1$ and $W_2$ are positively correlated.

We remark that for the above example, uncorrelatedness holds in a limiting sense as $\tau \rightarrow \infty$; in that case, ${\e[R^2]}/{[\e\, R]^2} \rightarrow \pi/4$ and hence $\Cov(W_1,W_2)\rightarrow 0$.  This limiting case corresponds to the truncated bivariate normal distributions, which we treat in the next section.

On the other hand, for given $\rho \neq 0$, we can apply Equation (\ref{eqn:bivariate-cov}) to construct a plethora of truncated elliptical distributions that are uncorrelated.  For the sake of illustration, suppose that $\rho = -1/\sqrt{2}$; then $\psi^* = \pi/4$ and
\[
h_1(-1/\sqrt{2}) = \frac{\sqrt{2}(4-\pi)}{4\pi},\ \ h_2(-1/\sqrt{2}) = h_3(-1/\sqrt{2})=\frac{4(2-\sqrt{2})}{2\pi}.
\]
Therefore, for any truncated elliptical distributions whose generating variable satisfies
\begin{eqnarray}
\label{eqn:zero-corr}
b:= \frac{\e[R^2]}{[\e \, R]^2}=\frac{h_2(-1/\sqrt{2})\ h_3(-1/\sqrt{2})}{h_1(-1/\sqrt{2})}=\frac{16(3\sqrt{2}-4)}{\pi(4-\pi)} \approx 1.44,
\end{eqnarray}
the variables $W_1$ and $W_2$ are uncorrelated.  For example, if $R$ follows a gamma distribution with shape parameter $(b-1)^{-1} \approx 2.27$ and any positive scale parameter, then Equation \eqref{eqn:zero-corr} can be satisfied.

We have now shown that even in the bivariate case and for the special case in which the truncation vector $\cc$ equals the mean $\boldsymbol{\mu}$, the truncated elliptical distributions do not inherit the correlation property of the untruncated elliptical distributions.  On the one hand, it is possible that $\rho=0$ can lead to positively correlated $W_1$ and $W_2$, as we have seen from the example on the truncated Student's $t$-distributions.  On the other hand, there exist elliptical distributions with $\rho<0$ such that the components of their truncated versions are uncorrelated.

\section{The multivariate normal case}
\label{sec:normal}


Throughout the rest of the paper, we denote by $\bzero$ any zero matrix or vector, irrespective of the dimension. In this section, we prove that the independence property of multivariate normal distributions can be carried over to their truncated counterparts.

\begin{theorem}
\label{independence_trnorm}
Suppose that the random vector $\WW \sim N_p(\Mu,\bSigma,\cc)$ is decomposed as in \eqref{vector_partition}.  Then $\WW_1$ and $\WW_2$ are mutually independent if and only if $\bSigma_{12} = \bzero$.
\end{theorem}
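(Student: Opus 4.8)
The plan is to prove the two implications separately, with the ``if'' direction being the substantive one. For the ``only if'' direction, suppose $\WW_1$ and $\WW_2$ are mutually independent. Then the joint density \eqref{trnorm} factors as a product of the two marginal densities on the region $\ww \ge \cc$. Taking logarithms on the interior of this region, where the density is strictly positive, the quadratic form $(\ww-\Mu)^\transp\bSigma^{-1}(\ww-\Mu)$ must equal a sum of a function of $\ww_1$ alone and a function of $\ww_2$ alone, plus a constant; differentiating twice with respect to $w_{1i}$ and $w_{2j}$ annihilates both single-block pieces, which forces the $(i,j)$ entry of $\bSigma^{-1}$ in the off-diagonal block to vanish. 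Hence the off-diagonal block of $\bSigma^{-1}$ is $\bzero$, and by the standard block-inverse identity this is equivalent to $\bSigma_{12} = \bzero$.

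For the ``if'' direction, assume $\bSigma_{12} = \bzero$, so that $\bSigma = \mathrm{diag}(\bSigma_{11},\bSigma_{22})$ and likewise $\bSigma^{-1} = \mathrm{diag}(\bSigma_{11}^{-1},\bSigma_{22}^{-1})$. Then the exponent in \eqref{trnorm} splits cleanly:
\[
(\ww-\Mu)^\transp\bSigma^{-1}(\ww-\Mu) = (\ww_1-\Mu_1)^\transp\bSigma_{11}^{-1}(\ww_1-\Mu_1) + (\ww_2-\Mu_2)^\transp\bSigma_{22}^{-1}(\ww_2-\Mu_2).
\]
The key observation is that the truncation region also factorizes: $\{\ww \ge \cc\} = \{\ww_1 \ge \cc_1\} \times \{\ww_2 \ge \cc_2\}$, a product of a subset of $\Re^{p_1}$ and a subset of $\Re^{p_2}$. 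This is precisely why no conditions on $\Mu$ or $\cc$ are needed --- the component-wise partial order never couples the two blocks. Consequently the normalizing integral $C^{-1}$ factors as a product of two integrals, one over $\{\ww_1 \ge \cc_1\}$ and one over $\{\ww_2 \ge \cc_2\}$, and the joint density \eqref{trnorm} therefore factors as
\[
f(\ww;\Mu,\bSigma,\cc) = f_1(\ww_1;\Mu_1,\bSigma_{11},\cc_1)\, f_2(\ww_2;\Mu_2,\bSigma_{22},\cc_2),
\]
where each $f_j$ is the density of an $N_{p_j}(\Mu_j,\bSigma_{jj},\cc_j)$ vector. By the factorization criterion this establishes both that $\WW_j \sim N_{p_j}(\Mu_j,\bSigma_{jj},\cc_j)$ and that $\WW_1 \perp \WW_2$.

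The main obstacle is essentially bookkeeping rather than a genuine difficulty: one must be careful in the ``only if'' direction to justify the factorization of the log-density pointwise on the open truncation region (where all three densities are continuous and strictly positive) before differentiating, and to confirm that the block-inverse relationship $\bSigma_{12} = \bzero \iff (\bSigma^{-1})_{12} = \bzero$ holds under the standing assumption that $\bSigma$ is positive definite (so all relevant Schur complements are invertible). Since the quadratic-form argument is elementary and the forward direction is a direct computation, I expect the entire proof to be short; the conceptual content is simply the recognition that the component-wise truncation region respects the block partition.
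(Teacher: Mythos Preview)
Your proof is correct, though note a small slip in the opening sentence: the substantive direction is the ``only if'' (independence $\Rightarrow \bSigma_{12}=\bzero$), not the ``if''; the paper explicitly remarks that Horrace had already proved the ``if'' part and left the converse open.

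For the ``if'' direction your argument coincides with the paper's. For the ``only if'' direction you take a genuinely different and more elementary route. The paper reduces to $\cc=\bzero$, applies the quadratic-form decomposition \eqref{quadraticdecomposition} to compute the marginal density of $\WW_1$ and hence the conditional density $f_{\WW_2\mid\WW_1=\ww_1}$ explicitly, then uses that this conditional density must be constant in $\ww_1$; equating it to its limit as $\ww_1\to\bzero$ and comparing the coefficient of $\ww_2$ on the two sides forces $\bSigma_{11}^{-1}\bSigma_{12}\bSigma_{22\cdot1}^{-1}=\bzero$. Your approach instead exploits directly that $\log f = \log f_1 + \log f_2$ on the open region $\{\ww>\cc\}$, takes the mixed partial $\partial^2/\partial w_{1i}\partial w_{2j}$ (the marginals being smooth by dominated convergence), and reads off that the off-diagonal block of $\bSigma^{-1}$ vanishes, whence $\bSigma_{12}=\bzero$ by the block-inverse formula. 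Your argument is shorter and avoids the explicit Schur-complement computations; the paper's argument, while heavier, has the advantage that its conditional-density machinery is reused verbatim in the elliptical case (Corollary~\ref{ell}), where your second-derivative trick would not by itself identify the generator $g$.
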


We remark that this result was stated in \cite[p.~214]{Horrace03}.  However, an inspection of the purported proof \cite[p.~218]{Horrace03} reveals that the `if' part of the result solely was established, so the converse assertion has remained open.  Unlike the classical untruncated normal distribution, the matrix $\bSigma$ is not the covariance matrix of $\WW$, so it is surprising that the independence of $\WW_1$ and $\WW_2$ is characterized by the condition $\bSigma_{12} = \bzero$.

\medskip

\noindent{\it Proof of Theorem \ref{independence_trnorm}}.
First, we note that
\begin{equation}
\label{wwgecc}
\{\ww \in \Re^p: \ww \ge \cc\} \equiv  \{\ww_1 \in \Re^{p_1}, \ww_2 \in \Re^{p_2}: \ww_1 \ge \cc_1, \ww_2 \ge \cc_2\}
\end{equation}

Now suppose that $\bSigma_{12} = \bzero$.   Then it is evident from \eqref{trnorm}, \eqref{Sigmadecomp}, and \eqref{wwgecc} that the density of $\WW$ reduces to a product of two terms corresponding to the distributions $N_{p_1}(\Mu_1,\bSigma_{11},\cc_1)$ and $N_{p_2}(\Mu_2,\bSigma_{22},\cc_2)$.  Consequently, $\WW_1$ and $\WW_2$ are mutually independent, and $\WW_j \sim N_{p_j}(\Mu_j,\bSigma_{jj},\cc_j)$, $j=1,2$.

Conversely, suppose that $\WW_{1}$ and $\WW_2$ are mutually independent.  For $\WW \sim N_p(\Mu,\bSigma,\cc)$, it is evident that $\WW-\cc \sim N_p(\Mu-\cc,\bSigma,\bzero)$.  Since $\WW_1$ and $\WW_2$ are mutually independent if and only if $\WW_1 - \cc_1$ and $\WW_2 - \cc_2$ are mutually independent then we can assume, with no loss of generality, that $\cc = \bzero$.

Thus, for $\WW \sim N_p(\Mu,\bSigma,\bzero)$, suppose that $\WW_1$ is independent of $\WW_2$.  By a well-known quadratic form decomposition (Anderson \cite[p.~638]{Anderson}), we have
\begin{equation}
\begin{aligned}
\label{quadraticdecomposition}
(&\ww-\Mu)^\transp\bSigma^{-1}(\ww-\Mu) \\
&= (\ww_1-\Mu_1)^\transp\bSigma_{11}^{-1}(\ww_1-\Mu_1) \\
& \quad + \big(\ww_2 - \Mu_2 - \bSigma_{21} \bSigma_{11}^{-1}(\ww_1-\Mu_1)\big)^\transp \bSigma_{22 \cdot 1}^{-1} \big(\ww_2 - \Mu_2 - \bSigma_{21} \bSigma_{11}^{-1}(\ww_1-\Mu_1)\big),
\end{aligned}
\end{equation}
where $\bSigma_{22 \cdot 1} = \bSigma_{22} - \bSigma_{21} \bSigma_{11}^{-1} \bSigma_{12}$.
Applying this decomposition to the density function \eqref{trnorm}, we find that in order to calculate the marginal density of $\WW_1$ it is necessary to consider the integral
\begin{multline}
\label{quadraticformintegral}
\int_{\ww_2 \ge \bzero} \exp \big[-\tfrac12
\big(\ww_2 - \Mu_2 - \bSigma_{21} \bSigma_{11}^{-1}(\ww_1-\Mu_1)\big)^\transp \\
\times \bSigma_{22 \cdot 1}^{-1} \big(\ww_2 - \Mu_2 - \bSigma_{21}\bSigma_{11}^{-1}(\ww_1-\Mu_1)\big)\big] \ds\ww_2.
\end{multline}
For fixed $\ww_1$, suppose that $\VV$ is a $p_2$-dimensional multivariate normal random vector with $\VV \sim N_{p_2}\big(\Mu_2 + \bSigma_{21} \bSigma_{11}^{-1}(\ww_1-\Mu_1),\bSigma_{22 \cdot 1}\big)$.  Then the integral \eqref{quadraticformintegral} equals
$$
(2\pi)^{p_2/2} \, (\det \bSigma_{22 \cdot 1})^{1/2} \, P(\VV \ge \bzero),
$$
Let $\VV_0 = \VV - \Mu_2 - \bSigma_{21} \bSigma_{11}^{-1}(\ww_1-\Mu_1) \sim N_{p_2}(\bzero,\bSigma_{22 \cdot 1})$; then,
\begin{align*}
P(\VV \ge \bzero) &= P\big(\VV - \Mu_2 - \bSigma_{21} \bSigma_{11}^{-1} (\ww_1-\Mu_1) \ge -\Mu_2 - \bSigma_{21} \bSigma_{11}^{-1} (\ww_1-\Mu_1)\big) \\
&= P\big(\VV_0 \ge - \Mu_2 - \bSigma_{21} \bSigma_{11}^{-1} (\ww_1-\Mu_1)\big).
\end{align*}
Since $\VV_0$ has the same distribution as $-\VV_0$ then it follows that
\begin{align*}
P(\VV \ge \bzero) &= P\big(-\VV_0 \ge - \Mu_2 -\bSigma_{21} \bSigma_{11}^{-1} (\ww_1-\Mu_1)\big) \\
&= P(\VV_0 \le \Mu_2 + \bSigma_{21} \bSigma_{11}^{-1} (\ww_1-\Mu_1)\big),
\end{align*}
and we denote this probability by $\Phi_{p_2}\big(\Mu_2 + \bSigma_{21} \bSigma_{11}^{-1} (\ww_1-\Mu_1),\bSigma_{22 \cdot 1}\big)$.

Therefore, the marginal density function of $\WW_1$ is
\begin{multline}
\label{W1marginalpdf}
f_{\WW_1}(\ww_1) = C \, (2\pi)^{p_2/2} \, (\det \bSigma_{22 \cdot 1})^{1/2}
\exp\big[-\tfrac12 (\ww_1-\Mu_1)^\transp\bSigma_{11}^{-1} (\ww_1-\Mu_1)\big] \\
\times \Phi_{p_2}\big(\Mu_2 + \bSigma_{21} \bSigma_{11}^{-1} (\ww_1-\Mu_1),\bSigma_{22 \cdot 1}\big),
\end{multline}
$\ww_1 \ge \bzero$.  It now follows from \eqref{trnorm}, \eqref{W1marginalpdf}, and the quadratic form decomposition \eqref{quadraticdecomposition}, that the conditional density function of $\WW_2$, given $\WW_1=\ww_1$, is
\begin{align*}
&f_{\WW_2|\WW_1=\ww_1}(\ww_2) \\
&= \frac{(2\pi)^{-p_2/2} \, (\det \bSigma_{22 \cdot 1})^{-1/2}}{\Phi_{p_2}\big(\Mu_2 + \bSigma_{21} \bSigma_{11}^{-1} (\ww_1-\Mu_1),\bSigma_{22 \cdot 1}\big)}
\frac{\exp\left[-\tfrac12(\ww-\Mu)^\transp\bSigma^{-1}
(\ww-\Mu)\right]}{\exp\left(-\tfrac12 (\ww_1-\Mu_1)^\transp\bSigma_{11}^{-1} (\ww_1-\Mu_1)\right)} \\
&= \frac{(2\pi)^{-p_2/2} \, (\det \bSigma_{22 \cdot 1})^{-1/2}}{\Phi_{p_2}\big(\Mu_2 + \bSigma_{21} \bSigma_{11}^{-1} (\ww_1-\Mu_1),\bSigma_{22 \cdot 1}\big)} \\
& \quad\times \exp \big[-\tfrac12
\big(\ww_2 - \Mu_2 - \bSigma_{21} \bSigma_{11}^{-1} (\ww_1-\Mu_1)\big)^\transp \bSigma_{22 \cdot 1}^{-1} \big(\ww_2 - \Mu_2 - \bSigma_{21} \bSigma_{11}^{-1} (\ww_1-\Mu_1)\big)\big],
\end{align*}
$\ww_1 \ge \bzero$, $\ww_2 \ge \bzero$.

Since $\WW_1$ is independent of $\WW_2$ then $f_{\WW_2|\WW_1=\ww_1}$ is constant in $\ww_1$.  Therefore,
$$
f_{\WW_2|\WW_1=\ww_1}(\ww_2)\equiv \lim_{\ww_1 \to \bzero} f_{\WW_2|\WW_1=\ww_1}(\ww_2),
$$
$\ww_1 \ge \bzero$, $\ww_2 \ge \bzero$, so we obtain
\begin{multline*}
\frac{\exp \big[-\tfrac12
\big(\ww_2 - \Mu_2 - \bSigma_{21} \bSigma_{11}^{-1} (\ww_1-\Mu_1)\big)^\transp \bSigma_{22 \cdot 1}^{-1} \big(\ww_2 - \Mu_2 - \bSigma_{21} \bSigma_{11}^{-1} (\ww_1-\Mu_1)\big)\big]}{(2\pi)^{p_2/2} \, (\det \bSigma_{22 \cdot 1})^{1/2}  \, \Phi_{p_2}\big(\Mu_2+\bSigma_{21} \bSigma_{11}^{-1} (\ww_1-\Mu_1),\bSigma_{22 \cdot 1}\big)} \\
\equiv \frac{\exp \big[-\tfrac12
\big(\ww_2 - \Mu_2 + \bSigma_{21} \bSigma_{11}^{-1} \Mu_1\big)^\transp \bSigma_{22 \cdot 1}^{-1} \big(\ww_2 - \Mu_2 + \bSigma_{21} \bSigma_{11}^{-1} \Mu_1\big)\big]}{(2\pi)^{p_2/2} \, (\det \bSigma_{22 \cdot 1})^{1/2}  \, \Phi_{p_2}(\Mu_2,\bSigma_{22 \cdot 1})},
\end{multline*}
$\ww_1 \ge \bzero$, $\ww_2 \ge \bzero$.  Cancelling common terms, we obtain
\begin{align*}
&\frac{\Phi_{p_2}\big(\Mu_2+\bSigma_{21} \bSigma_{11}^{-1} (\ww_1-\Mu_1),\bSigma_{22 \cdot 1}\big)}{\Phi_{p_2}(\Mu_2,\bSigma_{22 \cdot 1})} \\
& \qquad \equiv
\exp\big[-\tfrac12 \big(\ww_2 - \Mu_2 - \bSigma_{21} \bSigma_{11}^{-1} (\ww_1-\Mu_1)\big)^\transp \bSigma_{22 \cdot 1}^{-1} \big(\ww_2 - \Mu_2 - \bSigma_{21} \bSigma_{11}^{-1} (\ww_1-\Mu_1)\big) \\
& \qquad\qquad\qquad\quad + \tfrac12
\big(\ww_2 - \Mu_2 + \bSigma_{21} \bSigma_{11}^{-1} \Mu_1\big)^\transp \bSigma_{22 \cdot 1}^{-1} \big(\ww_2 - \Mu_2 + \bSigma_{21} \bSigma_{11}^{-1} \Mu_1\big)\big] \\
& \qquad = \exp\big[\tfrac12 \ww_1^\transp \bSigma_{11}^{-1} \bSigma_{12} \bSigma_{22 \cdot 1}^{-1} \big(2(\ww_2-\Mu_2) - \bSigma_{21} \bSigma_{11}^{-1} (\ww_1 - 2\Mu_1)\big)\big].
\end{align*}
Note that the left-hand side contains no term in $\ww_2$, whereas the right-hand side does.  Therefore, for all $\ww_1$, the coefficient of $\ww_2$ on the right-hand side necessarily is the zero vector; this can be proved by taking the logarithm of both sides and then calculating the gradient with respect to $\ww_2$.

Hence, $\ww_1^\transp \bSigma_{11}^{-1} \bSigma_{12} \bSigma_{22 \cdot 1}^{-1} \equiv \bzero$.  Since this holds for all $\ww_1 \ge \bzero$ then we obtain $\bSigma_{11}^{-1} \bSigma_{12} \bSigma_{22 \cdot 1}^{-1} = \bzero$.  As $\bSigma_{11}$ and $\bSigma_{22 \cdot 1}$ are non-singular, it follows that $\bSigma_{12} = \bzero$.
$\quad\qed$

\begin{remark}
\label{remark_W_2_untruncated}
{\rm
We remark that since the condition $\bSigma_{12} = \bzero$, which is necessary and sufficient for $\WW_1$ and $\WW_2$ to be mutually independent, requires no restrictions on $\cc$, then the same result holds if we let $\cc_2 \to -\infty$.  Consequently, Theorem \ref{independence_trnorm} remains valid if $\WW_1$ is truncated and $\WW_2$ is untruncated.
}\end{remark}

\section{The elliptical case}
\label{sec:elliptical}

In the elliptical case, as in the normal case, we may assume with no loss of generality, that the truncation point is $\cc = \bzero$.  Suppose that $\WW = (\WW_1,\WW_2)$ has a  truncated elliptical distribution with density function \eqref{trelli}.  Let
\[
Q(\ww) \equiv Q(\ww_1,\ww_2) = (\ww-\Mu)^\transp\bSigma^{-1}(\ww-\Mu),
\]
so the joint p.d.f. of $\WW$ is $g(Q(\ww_1,\ww_2))$.  In characterizing the distribution of $\WW$ through the independence of $\WW_1$ and $\WW_2$, we will require the following regularity conditions on the generator $g$:
\begin{enumerate}
\setlength\itemsep{-0.1em}
\item[(\namedlabel{R1}{R1})]
$g(t) > 0$ for all $t \ge 0$, $g$ is everywhere differentiable on $(0,\infty)$, and its derivative $g'$ is continuous.
\item[(\namedlabel{R2}{R2})]
The support of $g'$, {\it i.e.}, ${\rm supp}(g') = \{t > 0: g'(t) \neq 0\}$, is dense in $(0,\infty)$.
\item[(\namedlabel{R3}{R3})]
As $t\rightarrow \infty$, $\ds(\log g(t^2))/\ds t$ either tends to zero or diverges.
\end{enumerate}
We remark that these conditions appear to be mild as almost all of the commonly-used elliptical density functions that are described in \cite[Chapter 3]{Fang} satisfy (\ref{R1})-(\ref{R3}), an exception being the Kotz distribution with power parameter in the exponential term equal to $1/2.$

Now we establish as a consequence of Theorem \ref{independence_trnorm} a result that, under the regularity conditions (\ref{R1})-(\ref{R3}), a truncated multivariate elliptical distribution whose component vectors are independent can only be a truncated multivariate normal distribution.

\begin{corollary}
\label{ell}
Suppose that the generator $g$ satisfies the regularity conditions (\ref{R1})-(\ref{R3}).  Then $\WW_1$ and $\WW_2$ are independent if and only if $\WW$ has a truncated multivariate normal distribution with $\bSigma_{12}=\bzero$.
\end{corollary}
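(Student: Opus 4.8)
The plan is to get the `if' direction for free from Theorem~\ref{independence_trnorm}, and to reduce the `only if' direction to showing that independence forces the generator $g$ to be exponential on the range of $Q$. For `if': if $\WW$ is truncated multivariate normal with $\bSigma_{12}=\bzero$, then Theorem~\ref{independence_trnorm} already gives $\WW_1\perp\WW_2$. For `only if', I would assume $\WW_1\perp\WW_2$ and, as already permitted, take $\cc=\bzero$, so the joint density is $g\big(Q(\ww_1,\ww_2)\big)$ and independence reads $g\big(Q(\ww_1,\ww_2)\big)=f_{\WW_1}(\ww_1)\,f_{\WW_2}(\ww_2)$ with $f_{\WW_j}$ the marginal density of $\WW_j$. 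By (R1), $g>0$ and $\phi:=\log g$ is $C^1$ on $(0,\infty)$; each $f_{\WW_j}$ is strictly positive (an integral of a strictly positive integrand), and since $f_{\WW_1}(\ww_1)=g\big(Q(\ww_1,\ww_2^\circ)\big)/f_{\WW_2}(\ww_2^\circ)$ for a fixed $\ww_2^\circ\geq\bzero$ with $\ww_2^\circ\neq\Mu_2$, it is $C^1$ (likewise $f_{\WW_2}$). Taking logarithms and differentiating the resulting identity in $\ww_1$ and in $\ww_2$ gives
\[
\phi'\big(Q(\ww)\big)\,\nabla_{\ww_1}Q(\ww)=\nabla\log f_{\WW_1}(\ww_1),
\qquad
\phi'\big(Q(\ww)\big)\,\nabla_{\ww_2}Q(\ww)=\nabla\log f_{\WW_2}(\ww_2),
\]
where, writing $\bSigma^{-1}$ in blocks $\AA_{jk}$, $\nabla_{\ww_1}Q(\ww)=2\big(\AA_{11}(\ww_1-\Mu_1)+\AA_{12}(\ww_2-\Mu_2)\big)$ and symmetrically for $\nabla_{\ww_2}Q$.

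The first and hardest step is to show $\AA_{12}=\bzero$, equivalently $\bSigma_{12}=\bzero$. I would argue by contradiction: assuming $\AA_{12}\neq\bzero$, fix an interior point $(\ww_1^\ast,\ww_2^\ast)>\bzero$ and directions $\dd_1\in\Re^{p_1}$, $\dd_2\in\Re^{p_2}$ with $q_{12}:=\dd_1^\transp\AA_{12}\dd_2\neq 0$ (possible, e.g.\ $\dd_2=\AA_{12}^\transp\dd_1$ for any $\dd_1$ with $\AA_{12}^\transp\dd_1\neq\bzero$), and restrict to the two-dimensional slice $\ww_1=\ww_1^\ast+s\dd_1$, $\ww_2=\ww_2^\ast+u\dd_2$ with $(s,u)$ in a small box about the origin. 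On the slice $\tilde Q(s,u):=Q(\ww_1^\ast+s\dd_1,\ww_2^\ast+u\dd_2)$ is a quadratic in $(s,u)$ whose quadratic part is positive definite with cross term $q_{12}\neq0$, and $g(\tilde Q(s,u))=F_1(s)F_2(u)$ with $F_1,F_2$ smooth and strictly positive. Differentiating the logarithm in $s$ and in $u$ and eliminating $\phi'(\tilde Q)$ gives $(\log F_1)'(s)\,\partial_u\tilde Q(s,u)=(\log F_2)'(u)\,\partial_s\tilde Q(s,u)$; using two distinct values of $s$ to cancel $(\log F_2)'$ and comparing leading coefficients in the resulting polynomial identity in $u$ forces $(\log F_1)'$ to be constant on the slice, and then substituting back, together with $q_{11}q_{22}-q_{12}^2>0$, forces that constant to be $0$; likewise $(\log F_2)'\equiv0$. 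Hence $\nabla f_{\WW_1}(\ww_1^\ast)\cdot\dd_1=0$ for every $\dd_1$ not annihilated by $\AA_{12}^\transp$, which is a dense set, so $\nabla f_{\WW_1}(\ww_1^\ast)=\bzero$ by continuity; as $\ww_1^\ast$ was an arbitrary interior point, $f_{\WW_1}$ is constant on the interior of $\{\ww_1\geq\bzero\}$ --- impossible for a probability density on a set of infinite volume. Therefore $\AA_{12}=\bzero$, i.e.\ $\bSigma_{12}=\bzero$.

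With $\bSigma_{12}=\bzero$ in hand, $\nabla_{\ww_1}Q(\ww)=2\AA_{11}(\ww_1-\Mu_1)$ depends on $\ww_1$ alone, so for any fixed $\ww_1^\ast\geq\bzero$ with $\ww_1^\ast\neq\Mu_1$ the first displayed identity shows $\phi'\big(Q(\ww_1^\ast,\ww_2)\big)$ is constant in $\ww_2$. Since now $Q(\ww_1^\ast,\ww_2)=(\ww_1^\ast-\Mu_1)^\transp\AA_{11}(\ww_1^\ast-\Mu_1)+(\ww_2-\Mu_2)^\transp\AA_{22}(\ww_2-\Mu_2)$ runs over an interval $[\,\cdot\,,\infty)$ as $\ww_2$ runs over $\{\ww_2\geq\bzero\}$, $\phi'$ is constant on that interval; as $\ww_1^\ast$ varies, these intervals overlap on a common tail and their union is $(t_0,\infty)$ with $t_0=\inf_{\ww\geq\bzero}Q(\ww)$, so $\phi'\equiv\alpha$ on $(t_0,\infty)$ for some constant $\alpha$. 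Thus $g(t)=c\,e^{\alpha t}$ for $t\geq t_0$, and since $Q(\ww)\geq t_0$ while $Q(\ww)\to\infty$ on $\{\ww\geq\bzero\}$, integrability of the density forces $\alpha<0$. So the density is $c\exp\!\big(-\tfrac12(\ww-\Mu)^\transp(-\tfrac1{2\alpha}\bSigma)^{-1}(\ww-\Mu)\big)$ on $\{\ww\geq\bzero\}$, that is $\WW\sim N_p\big(\Mu,-\tfrac1{2\alpha}\bSigma,\bzero\big)$, a truncated multivariate normal distribution whose dispersion matrix has vanishing off-diagonal block; this last point can alternatively be read off from Theorem~\ref{independence_trnorm} once $\WW$ is known to be truncated normal. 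This closes the argument.

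The main obstacle, I expect, is the first step, ruling out $\bSigma_{12}\neq\bzero$: although it reduces to one two-variable computation, it is essential that nothing degenerate occur --- that $\log g$ really is differentiable, that the marginals are strictly positive and smooth enough to differentiate, and that the slice can be chosen so that its quadratic form has a nonzero cross term and a positive-definite quadratic part --- and these are exactly the situations the regularity conditions (R1)--(R3) are meant to exclude. The remaining step is comparatively routine: a function of $Q$ that is constant along every fiber obtained by freezing $\ww_1$ must be exponential on the range of $Q$, and integrability fixes the sign of the exponent.
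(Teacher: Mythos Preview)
Your argument is correct, and it takes a genuinely different route from the paper's. The paper works with the conditional density of $\WW_1$ given $\WW_2=\ww_2$, differentiates in $\ww_2$, and evaluates at a single point $\ww_2=\Eta$ to obtain a relation of the form
\[
\frac{g'(Q(\ww_1,\Eta))}{g(Q(\ww_1,\Eta))}\big(\bSigma_{21}\bSigma_{11}^{-1}\ww_1+\cc_2\big)=\cc_1;
\]
it then invokes (\ref{R2}) to show $\cc_1\neq\bzero$ and, assuming $\bSigma_{12}\neq\bzero$, sends $\ww_1=v\ee_1\to\infty$ and uses (\ref{R3}) on the asymptotic behaviour of $vg'(v^2)/g(v^2)$ to reach a contradiction. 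Your approach is instead entirely local: by restricting to a two-parameter affine slice you reduce the question to a polynomial identity in $(s,u)$, and the positive-definiteness of the restricted quadratic form ($q_{11}q_{22}>q_{12}^2$) forces the directional log-derivatives of $f_{\WW_1}$ to vanish, whence $f_{\WW_1}$ is constant on the open orthant --- impossible. The second step (deducing that $g$ is exponential on the range of $Q$ once $\bSigma_{12}=\bzero$) is close in spirit to the paper's, though the paper reads it off in one line from the displayed relation above with $\bSigma_{21}\bSigma_{11}^{-1}\ww_1$ gone.

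What your route buys is that it uses only (\ref{R1}): neither the density condition (\ref{R2}) nor the tail condition (\ref{R3}) enters your argument, so you have in fact proved the corollary under (\ref{R1}) alone. Your closing remark that ``these are exactly the situations the regularity conditions (R1)--(R3) are meant to exclude'' therefore undersells what you have done --- you might instead point out that (\ref{R2}) and (\ref{R3}) are superfluous here. The one place to be careful in writing this up is the elimination step on the slice: spell out that comparing the $u^2$-coefficients after cross-multiplying at two values $s_1\neq s_2$ gives $4q_{22}q_{12}\big(G_1(s_1)-G_1(s_2)\big)=0$, so $G_1$ is constant; then matching the $s$- and $u$-coefficients of the resulting linear identity yields $q_{12}^2=q_{11}q_{22}$ unless that constant is zero.
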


\begin{proof}
If $\WW$ has a truncated multivariate normal distribution with $\bSigma_{12}=\bzero$ then we have seen before that $\WW_1$ and $\WW_2$ are mutually independent, so we need only show the converse.

By integration, we obtain the marginal density function of $\WW_2$ as
$$
f_{\WW_2}(\ww_2) = \int_{\vv \ge \bzero} g(Q(\vv,\ww_2)) \ds \vv,
$$
and then the conditional density of $\WW_1$, given $\WW_2 = \ww_2$, is
\begin{equation}
\label{W1W2conditionalpdf}
\frac{f_\WW(\ww_1,\ww_2)}{f_{\WW_2}(\ww_2)} = \frac{g(Q(\ww_1,\ww_2))}{\int_{\vv \ge \bzero} g(Q(\vv,\ww_2)) \ds \vv}.
\end{equation}

Note that $\WW_1$ and $\WW_2$ are independent if and only if the conditional density function, \eqref{W1W2conditionalpdf}, of $\WW_1$, given $\WW_2 = \ww_2$, is constant in $\ww_2$.  By taking logarithms in \eqref{W1W2conditionalpdf} and then applying the gradient operator $\nabla_{\ww_2} = (\partial/\partial w_{p_1+1},\ldots,\partial/\partial w_p)^\transp$, we find that a necessary and sufficient condition for $\WW_1$ and $\WW_2$ to be independent is that
\begin{equation}
\label{W1W2_nasc_indep}
\frac{g'(Q(\ww_1,\ww_2))}{g(Q(\ww_1,\ww_2))} [\nabla_{\ww_2} Q(\ww_1,\ww_2)] = \frac{\int_{\vv \ge \bzero} g'(Q(\vv,\ww_2)) [\nabla_{\ww_2} Q(\vv,\ww_2)] \ds \vv}{\int_{\vv \ge \bzero} g(Q(\vv,\ww_2)) \ds \vv}
\end{equation}
for all $\ww_1 \ge \bzero$, $\ww_2 \ge \bzero$.  By \eqref{quadraticdecomposition},
$$
\nabla_{\ww_2} Q(\ww_1,\ww_2) = 2 \bSigma_{22 \cdot 1}^{-1} \big(\ww_2 - \Mu_2 - \bSigma_{21} \bSigma_{11}^{-1}(\ww_1-\Mu_1)\big);
$$
substituting this result in \eqref{W1W2_nasc_indep}, we find that a necessary and sufficient condition for independence is
\begin{multline*}
\frac{g'(Q(\ww_1,\ww_2))}{g(Q(\ww_1,\ww_2))} \bSigma_{22 \cdot 1}^{-1} \big(\ww_2 - \Mu_2 - \bSigma_{21} \bSigma_{11}^{-1}(\ww_1-\Mu_1)\big) \\
= \frac{\int_{\vv \ge \bzero} g'(Q(\vv,\ww_2)) \bSigma_{22 \cdot 1}^{-1} \big(\ww_2 - \Mu_2 - \bSigma_{21} \bSigma_{11}^{-1}(\vv-\Mu_1)\big) \ds \vv}{\int_{\vv \ge \bzero} g(Q(\vv,\ww_2)) \ds \vv},
\end{multline*}
$\ww_1 \ge \bzero$, $\ww_2 \ge \bzero$.  Cancelling $\bSigma_{22 \cdot 1}^{-1}$ on both sides of the latter equation, we obtain
\begin{multline}
\label{W1W2_nasc_indep2}
\frac{g'(Q(\ww_1,\ww_2))}{g(Q(\ww_1,\ww_2))} \big(\ww_2 - \Mu_2 - \bSigma_{21} \bSigma_{11}^{-1}(\ww_1-\Mu_1)\big) \\
= \frac{\int_{\vv \ge \bzero} g'(Q(\vv,\ww_2)) \big(\ww_2 - \Mu_2 - \bSigma_{21} \bSigma_{11}^{-1}(\vv-\Mu_1)\big) \ds \vv}{\int_{\vv \ge \bzero} g(Q(\vv,\ww_2)) \ds \vv},
\end{multline}
$\ww_1 \ge \bzero$, $\ww_2 \ge \bzero$.

Let $\Eta \ge \bzero$ be such that $\Eta \neq \Mu_2 - \bSigma_{21}\bSigma_{11}^{-1} \Mu_1$.  Evaluating both sides of \eqref{W1W2_nasc_indep2} at $\ww_2 = \Eta$, we obtain
\begin{multline}
\label{W1W2_nasc_indep3}
\frac{g'(Q(\ww_1,\Eta))}{g(Q(\ww_1,\Eta))} \big(\Eta - \Mu_2 - \bSigma_{21} \bSigma_{11}^{-1}(\ww_1-\Mu_1)\big) \\
= \frac{\int_{\vv \ge \bzero} g'(Q(\vv,\Eta)) \big(\Eta - \Mu_2 - \bSigma_{21} \bSigma_{11}^{-1}(\vv-\Mu_1)\big) \ds \vv}{\int_{\vv \ge \bzero} g(Q(\vv,\Eta)) \ds \vv},
\end{multline}
equivalently,
\begin{equation}
\label{gprime_condition}
\frac{g'(Q(\ww_1,\Eta))}{g(Q(\ww_1,\Eta))} (\bSigma_{21} \bSigma_{11}^{-1}\ww_1 + \cc_2) = \cc_1,
\end{equation}
for all $\ww_1 \ge \bzero$, where $\cc_2 = \Mu_2  -\Eta - \bSigma_{21} \bSigma^{-1}_{11} \Mu_1$ and $\cc_1$ is a $p_2 \times 1$ constant vector.

We also have $\|\cc_1\| < \infty$; otherwise, the left-hand side of \eqref{W1W2_nasc_indep3} is infinite for all $\ww_1 \ge \bzero$, and then it follows that $|g'(Q(\ww_1,\Eta))|$ is infinite for all $\ww_1 \ge \bzero$.  This implies that $g$ is unbounded everywhere, which is not possible since $g$ generates a density function.

Suppose that $\cc_1 = \bzero$; then, by \eqref{gprime_condition}, $g'(Q(\ww_1,\Eta)) = 0$ or $\bSigma_{21} \bSigma^{-1}_{11} \ww_1 + \cc_2 = \bzero$ for all $\ww_1  \ge \bzero$.  If $g'(Q(\ww_1,\Eta)) = 0$ for all $\ww_1 \ge \bzero$ then it follows that $g$ is a constant function; however, by (\ref{R2}), the support of $g'$ is dense, therefore $g$ cannot generate a density.  Also, by construction, $\cc_2 \neq \bzero$, so $\bSigma_{21} \bSigma^{-1}_{11} \ww_1 + \cc_2 \neq \bzero$ for all $\ww_1 \ge \bzero$.  Therefore, we have shown by contradiction that $\cc_1 \neq \bzero$.

Now suppose that $\bSigma_{12} \neq \bzero$.  Since $\bSigma_{11}$ is positive definite then $\bSigma_{11}^{-1} \bSigma_{12} \bSigma_{21} \bSigma_{11}^{-1}$ is positive semidefinite and has the same rank as $\bSigma_{12}$.  Since $\bSigma_{12} \neq \bzero$ then that rank is at least $1$, so at least one diagonal entry of $\bSigma_{11}^{-1} \bSigma_{12} \bSigma_{21} \bSigma_{11}^{-1}$ is positive; without loss of generality, we assume that the first diagonal entry, $(\bSigma_{11}^{-1} \bSigma_{12} \bSigma_{21} \bSigma_{11}^{-1})_{11}$, is positive.  Letting $\ee_1 = (1,0,\ldots,0)^\transp$, we obtain
$$
\|\bSigma_{21} \bSigma_{11}^{-1}\,\ee_1 \|^2 = (\bSigma_{21} \bSigma_{11}^{-1}\, \ee_1)^\transp \bSigma_{21} \bSigma_{11}^{-1}\, \ee_1 = (\bSigma_{11}^{-1} \bSigma_{12} \bSigma_{21} \bSigma_{11}^{-1})_{11} > 0;
$$
consequently, $\|v \bSigma_{21} \bSigma_{11}^{-1}\, \ee_1\| \to \infty$ as $v \to \infty$.

By \eqref{quadraticdecomposition},
$$
Q(v\ee_1,\Eta) =
\begin{pmatrix}v\ee_1 \\ \Eta\end{pmatrix}^\transp \bSigma^{-1} \begin{pmatrix}v\ee_1 \\ \Eta\end{pmatrix}
= v^2 \begin{pmatrix}\ee_1 \\ v^{-1}\Eta\end{pmatrix}^\transp \bSigma^{-1} \begin{pmatrix}\ee_1 \\ v^{-1}\Eta\end{pmatrix};
$$
therefore, as $v \to \infty$, we obtain $Q(v\ee_1,\Eta) \sim \kappa^2 v^2$ where
$$
\kappa^2 = \begin{pmatrix}\ee_1 \\ \bzero\end{pmatrix}^\transp \bSigma^{-1} \begin{pmatrix}\ee_1 \\ \bzero\end{pmatrix} = (\bSigma^{-1})_{11},
$$
the $(1,1)$th entry of $\bSigma^{-1}$.  Letting $v \to \infty$ in \eqref{gprime_condition}, we obtain
\begin{align}
\label{limitgprime}
\cc_1 &= \lim_{v\rightarrow \infty} \frac{g'(\kappa^2 v^2)}{g(\kappa^2 v^2)} \big(v \, \bSigma_{21} \bSigma_{11}^{-1}\ee_1 + \cc_2 \big) \nonumber \\
&= \lim_{v\rightarrow \infty} \frac{v g'(v^2)}{g(v^2)} \big(\kappa^{-1} \, \bSigma_{21} \bSigma_{11}^{-1}\ee_1 + v^{-1}\cc_2 \big).
\end{align}

By the regularity condition (\ref{R3}), $v g'(v^2)/g(v^2)$ tends to zero or diverges as $v \to \infty$.  If $v g'(v^2)/g(v^2) \to 0$ then the right-hand side of Equation (\ref{limitgprime}) tends to zero as $v \to \infty$, so we obtain $\cc_1 = \bzero$, which contradicts the fact that $\cc_1 \neq \bzero$.  On the other hand, if $v g'(v^2)/g(v^2)$ diverges as $v \to \infty$, then the right-hand side of Equation (\ref{limitgprime}) diverges, which contradicts the fact that $\|\cc_1\| < \infty$.  Since the assumption that $\bSigma_{12} \neq \bzero$ leads in either case to a contradiction then it follows that $\bSigma_{12} = \bzero$.

Since $\bSigma_{12} = \bzero$ then Equation \eqref{gprime_condition} reduces to
$$
\frac{g'(Q(\ww_1,\Eta))}{g(Q(\ww_1,\Eta))} \cc_2 = \cc_1;
$$
equivalently, $g'(t) = c_3 g(t)$, hence $g(t) = c_3 \exp(-c_4 t)$, for some constants $c_3$ and $c_4$.  Therefore, $\WW$ has a truncated multivariate normal distribution with $\bSigma_{12} = \bzero$.
\end{proof}

\section{Testing the independence of the components of a truncated multivariate normal vector}
\label{sec:application}

As an application of our results, we perform a likelihood ratio test for independence between $W_1$ and $W_2$, the components of a bivariate truncated normal random vector.  For $j,k=1,2$, denote by $\sigma_{jk}$ the $(j,k)$th element of $\bSigma$; let $\sigma_j = \sigma_{jj}^{1/2}$; and set $\rho = {\sigma_{12}}/(\sigma_{1}\sigma_{2})$.  By Theorem \ref{independence_trnorm}, testing for independence between $W_1$ and $W_2$ is equivalent to testing the null hypothesis, $H_{0}:\rho=0$, {\it vs.}~the alternative hypothesis, $H_a:\rho \ne 0$.  For illustrative purposes, we apply the test to a data set, considered by Cohen \cite[p.~192]{Cohen}, consisting of the entrance examination scores, $W_1$, and subsequent course averages, $W_2$, achieved by $n=529$ university students.  The data are viewed as generated randomly from a bivariate truncated normal distribution, with the cutoff value for $W_1$ being $159.5$, the minimum qualifying score on the entrance examination, and with the cutoff value for $W_2$ being $c_{2}=0$ since all course averages are nonnegative, respectively.  With these constraints, $n=517$ students were admitted.

Corresponding to $H_a$, we denote the unrestricted (or alternative) parameter space by $\bTheta = \{\btheta=(\mu_{1},\mu_{2},\sigma_{1},\sigma_{2},\rho)': \mu_{1} \in \Re, \mu_{2} \in \Re, \sigma_{1} > 0,\sigma_{2} > 0, -1 < \rho < 1\}$; wherever necessary, we may also denote the respective individual components of $\btheta$ by $\theta_i$, $i=1,\ldots,5$.  The restricted (or null) parameter space, as determined by $H_0$, then is $\bTheta_{0}=\{\mu_{1},\mu_{2}\in \Re, \sigma_{1},\sigma_{2}>0,\rho=0\}$, and the likelihood ratio test statistic for testing $H_0$ {\it vs.}~$H_a$ is
$$
\Lambda = \frac{\sup_{\btheta\in \bTheta_{0}}L(\btheta)}{\sup_{\btheta\in \bTheta}L(\btheta)}.
$$

For $\ww = (w_1,w_2)$, we write the joint probability density function of $\WW$ in the form
\begin{equation}
\label{bvmt}
f(\ww;\btheta)=\frac{C}{2\pi\sigma_{1}\sigma_{2} (1-\rho^{2})^{1/2}} \, \exp\Big(-\frac{1}{2(1-\rho^2)} Q(\ww;\btheta)\Big),
\end{equation}
$w_1 \ge c_1$, $w_2 \ge c_2$, where
$$
Q(\ww;\btheta) = \frac{(w_{1}-\mu_{1})^2}{\sigma_{1}^2}
-\frac{2\rho(w_{1}-\mu_{1})(w_{2}-\mu_{2})}{\sigma_{1}\sigma_{2}}+\frac{(w_{2}-\mu_{2})^2}{\sigma_{2}^2},
$$
and
\begin{equation*}
C^{-1}:=\frac{1}{2\pi\sigma_{1}\sigma_{2} (1-\rho^{2})^{1/2}}
\int_{c_{1}}^{\infty}\int_{c_{2}}^{\infty}\exp\Big(-\frac{1}{2(1-\rho^2)}Q(\ww;\btheta)\Big) \ds w_{1} \ds w_{2}
\end{equation*}
is the normalizing constant.

For a random sample $(W_{1,1},W_{2,1})',\ldots,(W_{1,n},W_{2,n})'$ from the distribution \eqref{bvmt}, the corresponding log-likelihood function can be written, up to additive constants that do not depend on the parameter $\btheta$, as
\begin{align*}
\log L(\btheta) &= -n \big[\log C-\log \sigma_{1}-\log \sigma_{2}-\tfrac{1}{2}\log(1-\rho^{2})\big] \\
& \quad + \frac{1}{2(1-\rho^{2})} \sum_{i=1}^2 \frac{1}{\sigma_i^2} \sum_{j=1}^{n}(w_{i,j}-\mu_{i})^{2} \\
& \quad - \frac{\rho}{\sigma_{1}\sigma_{2}(1-\rho^{2})}\sum_{j=1}^{n}(w_{1,j}-\mu_{1})(w_{2,j}-\mu_{2}).
\end{align*}

The asymptotic null distribution of the likelihood ratio statistic is derived from a classical theorem of Wilks \cite{Wilks} (\textit{cf.}, Casella and Berger \cite[pp.~489,~516]{Casella}, Hogg, {\it et al.} \cite[p.~361]{Hogg}).  First, we verify that the regularity conditions underlying Wilks' theorem are valid for the truncated normal distribution:

\begin{itemize}[leftmargin=15pt]
\setlength\itemsep{-0.1em}

\item[\namedlabel{1.}{1.}]
{\it The density $f(\ww;\btheta)$ is identifiable, i.e., if $f(\ww;\btheta_1) = f(\ww;\btheta_2)$ for all $\ww \ge \cc$ then $\btheta_{1}=\btheta_{2}$}: To prove this result, note that
\begin{align*}
Q(\ww;\btheta) \equiv \frac{w_{1}^{2}}{\sigma_{1}^{2}}+\frac{w_{2}^{2}}{\sigma_{2}^{2}}-2w_{1}\left(\frac{\mu_{1}}{\sigma_{1}^{2}}-\frac{\rho\mu_{2}}{\sigma_{1}\sigma_{2}}\right) - 2w_{2}\left(\frac{\mu_{2}}{\sigma_{2}^{2}}-\frac{\rho\mu_{1}}{\sigma_{1}\sigma_{2}}\right) - \frac{2\rho w_{1}w_{2}}{\sigma_{1}\sigma_{2}};
\end{align*}
then it follows from \eqref{bvmt} that the truncated bivariate normal distribution is an exponential family with {\it natural} (or {\it canonical}) sufficient statistic
$$
\VV=(w_{1},-w_{1}^{2},w_{2},-w_{2}^{2},w_{1}w_{2})'
$$
and corresponding {\it canonical} parameter vector
\begin{equation*}
\TT=\left(\frac{1}{\sigma_{1}^{2}},\frac{1}{\sigma_{2}^{2}},2\left(\frac{\mu_{1}}{\sigma_{1}^{2}}-\frac{\rho\mu_{2}}{\sigma_{1}\sigma_{2}}\right),2\left(\frac{\mu_{2}}{\sigma_{2}^{2}}-\frac{\rho\mu_{1}}{\sigma_{1}\sigma_{2}}\right),\frac{2\rho}{\sigma_{1}\sigma_{2}}\right)'.
\end{equation*}
It is now evident that the components of the natural sufficient statistic and of the canonical parameter vector are linearly independent over $\Re^5$.  Further, the exponential family is {\it minimal}, meaning that it is five-dimensional and cannot be reduced to a lower-dimensional model.  Consequently, by Barndorff-Nielsen \cite[pp.\ 112--113, Lemma 8.1 and Corollary 8.2]{Barndorff}, the model \eqref{bvmt} is identifiable.

\item[\namedlabel{2.}{2.}]
{\it The support of the distribution remains the same for all values of $\btheta$}:  This condition is clearly satisfied since the density $f(\ww;\btheta)$ has support $(c_{1},\infty)\times (c_{2},\infty)$, which does not depend on $\btheta$.

\item[\namedlabel{3.}{3.}]
{\it There exists an open subset $\bOmega_{0} \subset \bTheta$ such that the ``true value'' of the parameter $\btheta$ is in $\bOmega_{0}$, and all third-order partial derivatives of $f(\ww;\btheta)$ with respect to $\ww$ exist for all $\btheta\in \bOmega_{0}$}:  This condition is satisfied since $\bTheta$ is an open subset of $\Re^{5}$ and we can construct $\bOmega_{0}$ consisting of the union of sufficiently small open univariate balls around the true value of each of the parameters $\theta_1,\ldots,\theta_5$.  Further, the differentiability property follows from \eqref{bvmt}.

\item[\namedlabel{4.}{4.}]
{\it The integral $\int f(\ww|\btheta) \ds \ww$ is twice-differentiable with respect to $\btheta$}:  According to the usual Leibniz rule, partial derivatives and integrals may be interchanged whenever the same derivatives of the density function $f(\ww|\btheta)$ are continuous and integrable for all $\btheta \in \bTheta$ and all $w_1 \ge c_1$, $w_2 \ge c_2$; see Burkill and Burkill \cite[p.~289, Theorem 8.72]{Burkill}.  In the case of \eqref{bvmt}, the conditions for Leibniz' rule follows from the finiteness of the moments of any positive order for that distribution.   We also note that Barndorff-Nielsen \cite[p.~114, Theorem 8.1]{Barndorff} shows that differentiation with respect to $\theta$, to any order, of the integral is allowed under the integral sign.

\item[\namedlabel{5.}{5.}]
{\it The information matrix $I(\btheta)$ of the density function $f(\ww;\btheta)$ is positive definite}: As shown earlier, $f(\ww;\btheta)$ is a non-curved minimal exponential model.  By a well-known result for exponential families \cite[Section 9.3]{Barndorff}, the covariance matrix of $\VV$, denoted by $\Cov(\VV)$, is a full-rank matrix and therefore is positive definite. Since the information matrix is
$$
I(\btheta)=\Big(\frac{\partial \TT}{\partial \btheta}\Big)'\Cov(\VV)\frac{\partial \TT}{\partial \btheta},
$$
then it follows that $I(\btheta)$ also is of full rank.

\item[\namedlabel{6.}{6.}]
{\it All third-order partial derivatives of $\log f(\ww;\btheta)$ are bounded by functions of $\ww$ that have finite expectations}:  By straightforward differentiation with respect to $\theta_{j}$, $\theta_{k}$, and $\theta_{l}$, $1\le j,k,l\le 5$, we obtain
$$
\frac{\partial^3}{\partial \theta_j \partial \theta_k \partial \theta_l} \log f(\ww;\btheta) \le P_{jkl}(\ww),
$$
where $P_{jkl}(\ww)$ is a polynomial in $w_1, w_2$.  Since all polynomial moments of the truncated bivariate normal distribution are finite then $\E\, P_{jkl}(\WW) < \infty$ for all $j,k,l$.
\end{itemize}

Having shown that the regularity conditions underpinning Wilks' theorem are satisfied in our setting, we deduce that, under $H_0$, $-2 \log \Lambda \rightarrow \chi^{2}_{1}$ in distribution as $n \rightarrow \infty$.

To apply to the data of Cohen \cite[{\it loc. cit.}]{Cohen} the likelihood ratio statistic for testing $H_0$ {\it vs.}~$H_a$, we calculated the maximum likelihood estimates of the parameters of the bivariate truncated normal distribution using the \textsl{R} package of Wilhelm and Manjunath \cite{Wilhelm}; alternatively, the calculations can be done using the procedures described by Cohen \cite[pp.~186--190]{Cohen}.  We obtained from the \textsl{R} package the estimates,
$$
\widehat{\mu}_1=164.19,  \quad \widehat{\mu}_2=77.195, \quad \widehat{\sigma}_{1}=3.059, \quad \widehat{\sigma}_{2}=5.459, \quad \widehat{\rho}=0.431.
$$
The resulting observed value of the test statistic $-2\log\Lambda$ was $84.905$, and the corresponding P-value was found to be approximately $3.130423*10^{-20}$.  Consequently, the null hypothesis $H_0$ is rejected at any practical level of significance.

\section{Conclusions}
\label{sec:conclusions}

We have shown that the mutual independence of the components of a multivariate truncated elliptical distribution is equivalent to $\bSigma_{12}=\bzero$ subject to additional regularity conditions on the generator function $g$.  If these regularity conditions are satisfied then they imply that the underlying distribution is the truncated multivariate normal distribution. These results suggest two problems for future research. The first problem concerns the existence of multivariate truncated elliptical distributions, other than the truncated normal, for which $\bSigma_{12}=\bzero$ is equivalent to independence of its components. This problem leads naturally to a search for regularity conditions weaker than the ones that we have used in Corollary \ref{ell}.  The second direction is to characterize the property of uncorrelatedness for the multivariate truncated elliptical distributions; explicitly, the goal will be to obtain explicit criteria, in terms of the correlation matrix of the underlying multivariate elliptical distribution and its generator function, that are equivalent to zero correlation between components of its truncated analogs, $\WW_1$ and $\WW_2$.   We plan to study both of these directions in future research.

\end{document}